\theoremstyle{plain}
\newtheorem{theorem}{Theorem}[section]
\newtheorem{corollary}[theorem]{Corollary}
\newtheorem{obs}[theorem]{Observation}
\theoremstyle{definition}
\theoremstyle{remark}
\numberwithin{equation}{section}
\DeclareMathOperator{\conv}{conv}
\DeclareMathOperator{\card}{card}
\DeclareMathOperator{\Vol}{Vol}
\renewcommand{\d}[1]{\,\mathrm{d}{#1}}
\begin{document}



\title[Hermite-Hadamard for simplices]{A refinement of the left-hand side of Hermite-Hadamard inequality for simplices}

\author[M. Nowicka]{Monika Nowicka}
\address{Institute of Mathematics and Physics, UTP University of Science and Technology, al. prof. Kaliskiego 7, 85-796 Bydgoszcz, Poland}
\email{monika.nowicka@utp.edu.pl}
\author[A. Witkowski]{Alfred Witkowski}
\email{alfred.witkowski@utp.edu.pl}
\subjclass[2010]{26D15}
\keywords{Hermite-Hadamard inequality, convex function, barycentric coordinates}
\date{16.12.2014}

\begin{abstract} 
In this paper, we establish a new refinement of the left-hand side of Hermite-Hadamard inequality for convex functions of several variables defined on simplices.
\end{abstract}

\maketitle

%



\section{Introduction, definitions and notations}

The classical Hermite-Hadamard inequality \cite{Had} states that if a function $f\colon[a,b]\to\mathbb{R}$ is convex, then
\begin{equation*}
f\left(\frac{a+b}{2}\right)\leq \frac{1}{b-a}\int_a^b f(t)\d{t}\leq \frac{f(a)+f(b)}{2}.
\label{eq:HH-dim1}
\end{equation*}
This inequality has been discussed by many mathematicians.
We refer to \cite{DP,NP} and the references therein.
In the last few decades, several generalizations of the Hermite-Hadamard inequality have been established and studied.
One of them (\cite{Bes}) says that if $\Delta\subset\mathbb{R}^n$ is a simplex with barycenter $\mathbf{b}$ and vertices $\mathbf{x}_0,\dots,\mathbf{x}_n$ and $f\colon\Delta\to\mathbb{R}$ is convex, then
\begin{equation}\label{eq:HH-Bess}
	f(\mathbf{b})\leq \frac{1}{\Vol(\Delta)}\int_\Delta f(\mathbf{x})\d{\mathbf{x}}\leq \frac{f(\mathbf{x}_0)+\dots f(\mathbf{x}_n)}{n+1}.
\end{equation} 
 Wąsowicz and Witkowski in \cite{WW} and 
Mitroi and Spiridon in \cite{MS} investigated the relationship between the left and right-hand sides of \eqref{eq:HH-Bess}. \\
Interesting refinement of both inequalities  in \eqref{eq:HH-Bess} was obtained by Ra{\"\i}ssouli and Dragomir in \cite{RD}. In this paper we use their method to obtain another refinement of the left-hand side of Hermite-Hadamard inequality on simplices.

Before we formulate the main theorem of this paper, we first give some definitions and notations.
For a fixed  natural number $n\geq 1$ let $N=\{0,1,\dots,n\}$. 
Suppose $\mathbf{x}_0,\dots, \mathbf{x}_n\in\mathbb{R}^n$ are such that the vectors $\vv{\mathbf{x}_0\mathbf{x}_i},\ i=1,\dots,n$ are linearly independent.
The set $\Delta=\conv\left\{\mathbf{x}_i\colon  i\in N \right\}$ is called a \textit{simplex}.
Such simplex is an $n$-dimensional object and we shall call it sometimes an $n$-simplex if we would like to emphasize  its dimension.
The point 
\begin{equation*}
\mathbf{b}=\frac{1}{n+1}(\mathbf{x}_0+\dots+\mathbf{x}_n)
\end{equation*}
is called the \textit{barycenter} of $\Delta$.
For any  subset  $K$ of $N$ of cardinality $k\leq n$ we define an $(n-k)$-simplex $\Delta^{[K]}$ as follows.
For each $j\in N\setminus K$ let
\begin{equation}
\mathbf{x}^{[K]}_j=\frac{1}{n+1}\sum_{i\in K} \mathbf{x}_i + \frac{n+1-k}{n+1}\mathbf{x}_j
\label{eq:vertices of Delta[K]}
\end{equation} 
and
\begin{equation}
\Delta^{[K]}=\conv\left\{\mathbf{x}^{[K]}_j\colon  j\in N\setminus K \right\}.
\label{eq:definition of Delta[K]}
\end{equation} 

Obviously $\Delta^{[\emptyset]}=\Delta$ and $\Delta^{[K]}=\mathbf{b}$ if $\card N\setminus K=1$.

The integration over a $k$-dimensional simplex will be always with respect to the $k$-dimensional Lebesgue measure denoted by $\d{\mathbf{x}}$ and the $k$-dimensional volume will be denoted by $\Vol$. There will be no ambiguity, as the dimension will be obvious from the context.

By $H(\mathbf{a},\lambda):\mathbb{R}^n\to \mathbb{R}^n$ we denote the homothety with center $\mathbf{a}$ and scale $\lambda$, given by the formula
$$H(\mathbf{a},\lambda)(\mathbf{x})=\mathbf{a}+\lambda(\mathbf{x}-\mathbf{a}).$$

\section{Refinement of the left-hand side}

This is  the main result of our paper.
\begin{theorem}
	If $f\colon\Delta\to\mathbb{R}$ is a convex function and $K\subset L\subsetneq N$, then
	$$\frac{1}{\Vol \Delta^{[L]}}\int_{\Delta^{[L]}}f(\mathbf{x})\d{\mathbf{x}}\leq \frac{1}{\Vol \Delta^{[K]}}\int_{\Delta^{[K]}}f(\mathbf{x})\d{\mathbf{x}}.$$
	\label{theorem:main}
\end{theorem}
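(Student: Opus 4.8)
The plan is to reduce the statement to the case of \emph{adjacent} index sets and then to a one–dimensional Jensen inequality. First I would observe that it suffices to treat the case $L=K\cup\{m\}$ with $m\notin K$: for a general pair $K\subseteq L$ one inserts a chain $K=K_0\subset K_1\subset\dots\subset K_r=L$ in which each $K_{i+1}$ is obtained from $K_i$ by adjoining a single index, and since every member is contained in $L\subsetneq N$ each $\Delta^{[K_i]}$ is a genuine simplex (of dimension $\card N\setminus K_i\ge 1$); multiplying the adjacent inequalities along the chain yields the general one. For the adjacent case set $k=\card K$ and $d=n-k=\dim\Delta^{[K]}$. Substituting into \eqref{eq:vertices of Delta[K]} and expressing each $\mathbf{x}_j$ through the vertices $\mathbf{x}_i^{[K]}$, a direct computation gives, for every $j\in N\setminus L$,
$$\mathbf{x}_j^{[L]}=\frac{1}{d+1}\mathbf{x}_m^{[K]}+\frac{d}{d+1}\mathbf{x}_j^{[K]}=H\!\left(\mathbf{x}_m^{[K]},\tfrac{d}{d+1}\right)\!\bigl(\mathbf{x}_j^{[K]}\bigr).$$
Hence $\Delta^{[L]}$ is the image under $H(\mathbf{x}_m^{[K]},\frac{d}{d+1})$ of the facet $G=\conv\{\mathbf{x}_j^{[K]}\colon j\in N\setminus L\}$ of $\Delta^{[K]}$ opposite the vertex $\mathbf{x}_m^{[K]}$; equivalently $\Delta^{[K]}$ is the cone with apex $\mathbf{x}_m^{[K]}$ over the base $G$, and $\Delta^{[L]}$ is its parallel section at ratio $t=\frac{d}{d+1}$.

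This structure suggests slicing $\Delta^{[K]}$ by the sections $G_t=H(\mathbf{x}_m^{[K]},t)(G)$, $t\in[0,1]$, and introducing the section average
$$\phi(t)=\frac{1}{\Vol G_t}\int_{G_t}f(\mathbf{x})\d{\mathbf{x}}=\frac{1}{\Vol G}\int_G f\!\left(\mathbf{x}_m^{[K]}+t(\mathbf{g}-\mathbf{x}_m^{[K]})\right)\d{\mathbf{g}},$$
the second equality following from the change of variables $\mathbf{x}=H(\mathbf{x}_m^{[K]},t)(\mathbf{g})$, which scales $(d-1)$-volume by $t^{d-1}$. A cone (Fubini) computation, writing each point of $\Delta^{[K]}$ uniquely as $\mathbf{x}_m^{[K]}+t(\mathbf{g}-\mathbf{x}_m^{[K]})$ with $\mathbf{g}\in G$, $t\in(0,1]$, and tracking the Jacobian factor $t^{d-1}$, then yields
$$\frac{1}{\Vol\Delta^{[K]}}\int_{\Delta^{[K]}}f(\mathbf{x})\d{\mathbf{x}}=d\int_0^1 t^{d-1}\phi(t)\d{t},$$
where the normalization $d\int_0^1 t^{d-1}\d{t}=1$ is checked by putting $f\equiv 1$. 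By the first paragraph the left-hand side of the theorem is exactly $\phi\!\left(\frac{d}{d+1}\right)$.

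Thus the whole statement collapses to the scalar inequality $\phi\!\left(\frac{d}{d+1}\right)\le d\int_0^1 t^{d-1}\phi(t)\d{t}$, and convexity enters twice. Since $t\mapsto f\!\left(\mathbf{x}_m^{[K]}+t(\mathbf{g}-\mathbf{x}_m^{[K]})\right)$ is the restriction of the convex $f$ to a segment, it is convex in $t$ for each fixed $\mathbf{g}$, and averaging over $\mathbf{g}\in G$ shows that $\phi$ is convex on $[0,1]$. Since $d\,t^{d-1}\d{t}$ is a probability measure on $[0,1]$ with barycenter $d\int_0^1 t^{d}\d{t}=\frac{d}{d+1}$, Jensen's inequality applied to the convex $\phi$ gives precisely $\phi\!\left(\frac{d}{d+1}\right)\le d\int_0^1 t^{d-1}\phi(t)\d{t}$, which is the desired inequality.

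I expect the main obstacle to be the cone/Fubini change of variables and in particular pinning down the weight $t^{d-1}$ in the radial integral: the argument works only because the density $d\,t^{d-1}$ has mean exactly $\frac{d}{d+1}$, the very ratio at which $\Delta^{[L]}$ sits inside $\Delta^{[K]}$. Once that averaging identity is established, the convexity of $\phi$ and a single application of Jensen's inequality finish the proof at once.
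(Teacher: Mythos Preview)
Your proof is correct and takes a genuinely different route from the paper's. Both arguments share the reduction to $L=K\cup\{m\}$ and the homothety identification $\Delta^{[L]}=H\bigl(\mathbf{x}_m^{[K]},\tfrac{d}{d+1}\bigr)(G)$ (this is the paper's Observation~\ref{obs:2}). From there the paper invokes the Dragomir--Ra\"\i ssouli iterative subdivision of the facet $G$: for each subsimplex $\sigma$ in the $p$-th stage it cones off to the apex, applies the \emph{left Hermite--Hadamard inequality} on the resulting $d$-simplex, observes that the barycenter of that cone coincides with the barycenter of $H(\sigma)\subset\Delta^{[L]}$, and finally passes to the limit using \eqref{eq:DR2}. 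Your argument instead slices $\Delta^{[K]}$ by the parallel sections $G_t$, reduces everything to the convexity of the scalar section-average $\phi$, and finishes with a single Jensen inequality against the probability measure $d\,t^{d-1}\,\mathrm{d}t$, whose mean is exactly $\tfrac{d}{d+1}$. Your approach is more self-contained (no external limit theorem) and makes transparent \emph{why} the ratio $\tfrac{d}{d+1}$ is forced: it is the barycenter of the radial weight $d\,t^{d-1}$. The paper's approach, by contrast, exhibits the result as a continuous analogue of the Dragomir--Ra\"\i ssouli refinement scheme, which is the declared theme of the paper; the coincidence of barycenters that it calls \emph{comme par miracle} is, in your language, precisely the identity $\tfrac{1}{d+1}\cdot 0+\tfrac{d}{d+1}\cdot 1=\tfrac{d}{d+1}$.
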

Given the remark stated after the formula \eqref{eq:definition of Delta[K]} it is clear that Theorem \ref{theorem:main} refines the LHS of \eqref{eq:HH-Bess}.

Let us begin with two observations, which will make clear the nature of simplices $\Delta^{[K]}$.

First observation follows immediately from \eqref{eq:vertices of Delta[K]}.
\begin{obs}
All simplices $\Delta^{[K]}$ have common barycenter.
\end{obs}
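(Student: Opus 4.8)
The plan is to verify by direct computation that the barycenter of each $\Delta^{[K]}$ coincides with $\mathbf{b}$, and hence is independent of $K$. The simplex $\Delta^{[K]}$ has vertex set $\{\mathbf{x}^{[K]}_j : j\in N\setminus K\}$, and since $\card(N\setminus K)=n+1-k$, its barycenter is simply the arithmetic mean of these $n+1-k$ vertices. So the entire argument reduces to evaluating
$$\frac{1}{n+1-k}\sum_{j\in N\setminus K}\mathbf{x}^{[K]}_j$$
and checking that it equals $\mathbf{b}$.

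First I would substitute the defining formula \eqref{eq:vertices of Delta[K]} into this mean. The summand splits into two pieces. The piece $\frac{1}{n+1}\sum_{i\in K}\mathbf{x}_i$ does not depend on $j$, so adding it over the $n+1-k$ indices $j$ and then dividing by $n+1-k$ reproduces it unchanged. The second piece $\frac{n+1-k}{n+1}\mathbf{x}_j$ carries a factor of $n+1-k$ that cancels against the $\frac{1}{n+1-k}$ in front, leaving $\frac{1}{n+1}\sum_{j\in N\setminus K}\mathbf{x}_j$.

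Combining the two contributions gives
$$\frac{1}{n+1-k}\sum_{j\in N\setminus K}\mathbf{x}^{[K]}_j=\frac{1}{n+1}\sum_{i\in K}\mathbf{x}_i+\frac{1}{n+1}\sum_{j\in N\setminus K}\mathbf{x}_j=\frac{1}{n+1}\sum_{i\in N}\mathbf{x}_i=\mathbf{b},$$
which manifestly does not involve $K$. As this holds for every admissible $K$, all the simplices $\Delta^{[K]}$ share the common barycenter $\mathbf{b}$. There is no genuine obstacle here; the only thing to watch is the bookkeeping of the cardinality $n+1-k$ so that the two factors cancel correctly, which is exactly what makes the $K$-dependent averaging collapse back to the full vertex sum.
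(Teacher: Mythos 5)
Your computation is correct and is precisely the verification the paper has in mind when it says the observation ``follows immediately from \eqref{eq:vertices of Delta[K]}'': averaging the vertices $\mathbf{x}^{[K]}_j$ makes the factor $n+1-k$ cancel and collapses the mean to $\mathbf{b}$. You have simply written out the one-line calculation the authors left implicit, even noting the slightly stronger fact that the common barycenter is $\mathbf{b}$ itself.
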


\begin{obs}
If $K\subset L\subsetneq N$ and $\card L=\card K+1$, then $\Delta^{[L]}$ arises from $\Delta^{[K]}$ in the following way:\\
let $l\in L\setminus K$ and let $\Delta^{[K]}_l$ be the face of $\Delta^{[K]}$ opposite to $\mathbf{x}_l^{[K]}$.
Then

\begin{equation*}
\Delta^{[L]}=H\left(\mathbf{x}_l^{[K]},\frac{n-\card K}{n+1-\card K}\right)\left(\Delta^{[K]}_l\right)
\end{equation*}
\label{obs:2}
\end{obs}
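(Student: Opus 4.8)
The plan is to use that a homothety is an affine map and that both $\Delta^{[L]}$ and the face $\Delta^{[K]}_l$ are simplices, i.e.\ convex hulls of finitely many vertices. Writing $k=\card K$ and $L=K\cup\{l\}$, an affine map is determined by its action on vertices and carries convex hulls of points onto convex hulls of their images, so it suffices to check that $H\bigl(\mathbf{x}_l^{[K]},\tfrac{n-k}{n+1-k}\bigr)$ sends the vertices of $\Delta^{[K]}_l$ bijectively onto the vertices of $\Delta^{[L]}$. The face $\Delta^{[K]}_l$ opposite $\mathbf{x}_l^{[K]}$ is spanned by $\{\mathbf{x}_j^{[K]}:j\in(N\setminus K)\setminus\{l\}\}$, and since $(N\setminus K)\setminus\{l\}=N\setminus L$, this face and $\Delta^{[L]}$ are parametrized by the very same index set $N\setminus L$. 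Thus the whole statement reduces to verifying, for each $j\in N\setminus L$, the single vertex identity
$$H\Bigl(\mathbf{x}_l^{[K]},\tfrac{n-k}{n+1-k}\Bigr)\bigl(\mathbf{x}_j^{[K]}\bigr)=\mathbf{x}_j^{[L]}.$$

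First I would compute the difference $\mathbf{x}_j^{[K]}-\mathbf{x}_l^{[K]}$ directly from the defining formula \eqref{eq:vertices of Delta[K]}. Because both vertices share the same summand $\frac{1}{n+1}\sum_{i\in K}\mathbf{x}_i$, that term cancels and one is left with
$$\mathbf{x}_j^{[K]}-\mathbf{x}_l^{[K]}=\frac{n+1-k}{n+1}\,(\mathbf{x}_j-\mathbf{x}_l).$$
Substituting this into the definition $H(\mathbf{a},\lambda)(\mathbf{x})=\mathbf{a}+\lambda(\mathbf{x}-\mathbf{a})$ with $\lambda=\frac{n-k}{n+1-k}$ makes the scale factor telescope, since $\lambda\cdot\frac{n+1-k}{n+1}=\frac{n-k}{n+1}$, so the image becomes $\mathbf{x}_l^{[K]}+\frac{n-k}{n+1}(\mathbf{x}_j-\mathbf{x}_l)$.

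The final step is to re-expand $\mathbf{x}_l^{[K]}$ and collect the coefficients of $\mathbf{x}_l$ and $\mathbf{x}_j$. The coefficient of $\mathbf{x}_l$ is $\frac{n+1-k}{n+1}-\frac{n-k}{n+1}=\frac{1}{n+1}$, which is exactly what upgrades $\frac{1}{n+1}\sum_{i\in K}\mathbf{x}_i$ into $\frac{1}{n+1}\sum_{i\in L}\mathbf{x}_i$, while the coefficient of $\mathbf{x}_j$ is $\frac{n-k}{n+1}=\frac{n+1-(k+1)}{n+1}$, matching the factor in \eqref{eq:vertices of Delta[K]} applied to $L$ because $\card L=k+1$. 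Comparing the two expressions then yields $\mathbf{x}_j^{[L]}$ exactly. I expect no serious obstacle here: the argument is pure affine-geometry bookkeeping, and the only point to watch is confirming that the chosen scale $\frac{n-k}{n+1-k}$ is precisely the value producing the extra $\frac{1}{n+1}\mathbf{x}_l$ needed to promote $K$ to $L$; once the cancellation of the shared $K$-sum is noticed, everything collapses to this one coefficient check.
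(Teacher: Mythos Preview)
Your proof is correct and follows essentially the same approach as the paper: both reduce the claim to a vertex-by-vertex check and carry out the same algebraic manipulation, with the paper starting from $\mathbf{x}_s^{[L]}$ and rewriting it as $H(\mathbf{x}_0^{[K]},\tfrac{n-k}{n+1-k})(\mathbf{x}_s^{[K]})$, while you run the computation in the opposite direction. Your explicit remark that affine maps send convex hulls to convex hulls (so the vertex identity suffices) makes transparent a step the paper leaves implicit.
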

\begin{proof}
	Assume, without loss of generality that $K=\{1,\dots,k\}$ and $L=\{0\}\cup K$. 
Let $k<s\leq n$.
By \eqref{eq:vertices of Delta[K]} the vertices of $\Delta^{[L]}$ are
$$x^{[L]}_{s}=\frac{1}{n+1}\sum\limits_{i=0}^k\mathbf{x}_{i}+\frac{n-k}{n+1}\mathbf{x}_s.$$
Then 
\begin{align*}
x^{[L]}_{s}&=\tfrac{1}{n+1}\sum\limits_{i=1}^k\mathbf{x}_{i}+\tfrac{1}{n+1}\mathbf{x}_0+\tfrac{n-k}{n+1}\mathbf{x}_s\\
&=\tfrac{1}{n+1}\sum\limits_{i=1}^k\mathbf{x}_{i}+\tfrac{n+1-k}{n+1}\mathbf{x}_0+\tfrac{n-k}{n+1}\mathbf{x}_s-\tfrac{n-k}{n+1}\mathbf{x}_0\\
&=\tfrac{1}{n+1}\sum\limits_{i=1}^k\mathbf{x}_{i}+\tfrac{n+1-k}{n+1}\mathbf{x}_0+\tfrac{n-k}{n+1-k}\left(
\tfrac{n+1-k}{n+1}\mathbf{x}_s-\tfrac{n+1-k}{n+1}\mathbf{x}_0\right)\\
&=\mathbf{x}_0^{[K]}+\tfrac{n-k}{n+1-k}\left(\mathbf{x}_s^{[K]}-\mathbf{x}_0^{[K]}\right)
=H\left(\mathbf{x}_0^{[K]},\tfrac{n-k}{n+1-k}\right)\left(\mathbf{x}_s^{[K]}\right).\qedhere
\end{align*}
\end{proof}

Let us brief on the approach proposed by 
 Dragomir and Ra{\"\i}ssouli
in \cite{RD}. They constructed the sequence of subsimplices of $\Delta$ as follows:\\
Let $\mathbf{b}$ be the barycenter of $\Delta$. One can divide $\Delta$ into $n+1$ subsimplices
$$D_i=\conv\{\mathbf{x}_0,\dots,\mathbf{x}_{i-1},\mathbf{b},\mathbf{x}_{i+1},\dots,\mathbf{x}_n\},\quad i=0,1,\dots,n.$$
It is important to note that all these simplices have the same volume.\\ 
Denote by $\mathcal{D}_1$ the set of simplices created this way. The set $\mathcal{D}_{p+1}$ is constructed by applying the above procedure to all simplices in $\mathcal{D}_p$. Dragomir and Ra{\"\i}ssouli proved that for a convex function $f:\Delta\to\mathbb{R}$ one has
\begin{align}
f(\mathbf{b})\leq
\frac{1}{\card \mathcal{D}_p}\sum_{\delta\in\mathcal{D}_p} f(\mathbf{b}_\delta)&\leq
\frac{1}{\card \mathcal{D}_{p+1}}\sum_{\delta\in\mathcal{D}_{p+1}} f(\mathbf{b}_\delta)\notag\\
\intertext{and}
\lim_{p\to\infty} \frac{1}{\card \mathcal{D}_p}\sum_{\delta\in\mathcal{D}_p} f(\mathbf{b}_\delta)&= \frac{1}{\Vol \Delta}\int_\Delta f(\mathbf{x})\d{\mathbf{x}},\label{eq:DR2}
\end{align}
where $\mathbf{b}_\delta$ denotes the barycenter of $\delta$.

We shall use the above procedure to prove the main result of this paper.

\begin{proof}[Proof of Theorem~\ref{theorem:main}.]
	Obviously it is enough to prove the result in case $\card K+1=\card L$.  As above we may assume $K=\{1,\dots,k\}$ and $L=\{0\}\cup K$. Let $\Sigma=\Delta^{[K]}_0$ denote the face of $\Delta^{[K]}$ opposite to $\mathbf{x}_0^{[K]}$. For simplicity denote by $H$ the homothety with center $\mathbf{x}_0^{[K]}$ and scale $\frac{n-k}{n+1-k}$. Then, by Observation \ref{obs:2} we see that $\Delta^{[L]}=H(\Sigma)$.

Let us apply the Dragomir-Ra\"\i ssouli process to $\Sigma$.  Thus we obtain a sequence of sets of subsimplices of $\Sigma$ denoted by $\mathcal{D}_p$. \\
Fix $p\geq 1$. For every $\sigma\in \mathcal{D}_p$ let $\Sigma_\sigma=\conv \left(\sigma\cup\left\{x_0^{[K]}\right\}\right)$. Clearly  the simplices $\Sigma_\sigma$ form a partition of $\Delta^{[K]}$ into simplices of the same height thus  $\Vol \Sigma_\sigma=\Vol\Delta^{[K]} / \card\mathcal{D}_p$.\\
Now we apply the left-hand side of the Hermite-Hadamard inequality to all simplices $\Sigma_\sigma$ to obtain

\begin{align}\label{eq:sum in barycenters}
	\frac{1}{\card \mathcal{D}_p}\sum_{\sigma\in \mathcal{D}_p} f(\mathbf{b}_{\Sigma_\sigma})&\leq \sum_{\sigma\in \mathcal{D}_p}\frac{1}{\Vol \Sigma_\sigma}\int\limits_{\Sigma_\sigma} f(\mathbf{x}) \d{\mathbf{x}}	
		=\frac{1}{\Vol \Delta^{[K]} }\int\limits_{\Delta^{[K]}} f(\mathbf{x}) \d{\mathbf{x}}.
\end{align}
Since $\Delta^{[L]}$ is the image of $\Sigma$ by $H$,  the sets $$H(\mathcal{D}_p)=\{H(\sigma): \sigma\in \mathcal{D}_p\}$$ form the Dragomir-Ra\"\i ssouli sequence for $\Delta^{[L]}$. Moreover, \textit{ comme par miracle} \cite{Prev}, the barycenters of $\Sigma_\sigma$ and that of $H(\sigma)$ coincide, i.e.
\begin{equation}
\mathbf{b}_{\Sigma_\sigma}=\mathbf{b}_{H(\sigma)}.
\label{eq:equality of barycenters}
\end{equation}
From \eqref{eq:sum in barycenters} and \eqref{eq:equality of barycenters} we conclude
\begin{align}\label{eq:sum in barycenters L}
	\frac{1}{\card \mathcal{D}_p}\sum_{\sigma\in \mathcal{D}_p} f(\mathbf{b}_{H(\sigma)})&\leq \frac{1}{\Vol \Delta^{[K]} }\int_{\Delta^{[K]}} f(\mathbf{x}) \d{\mathbf{x}}.
\end{align}
and applying \eqref{eq:DR2} 
\begin{align}\label{eq:limit sum in barycenters L}
	&\lim_{p\to\infty} \frac{1}{\card \mathcal{D}_p}\sum_{\sigma\in \mathcal{D}_p} f(\mathbf{b}_{H(\sigma)})	=	\frac{1}{\Vol \Delta^{[L]} }\int_{\Delta^{[L]}} f(\mathbf{x}) \d{\mathbf{x}}.
\end{align}

Now the assertion follows immediately from  \eqref{eq:limit sum in barycenters L} and \eqref{eq:sum in barycenters L}.

\end{proof}

From Theorem \ref{theorem:main} we obtain the corollary.
\begin{corollary}
Let $K_0, K_1,\ldots, K_{n}$ be a sequence of subsets of $N$ such that
$$K_0\subset K_1 \subset \ldots \subset K_{n} \ \text{and} \ \card K_i=i, \  i=0,1, \ldots, n.$$
If $f:\Delta\to\mathbb{R}$ is convex, then
\begin{align*}
f(\mathbf{b})&=\frac{1}{\Vol(\Delta^{[K_{n}]})}\int_{\Delta^{[K_{n}]}}f(\mathbf{x})\d{\mathbf{x}}\leq \frac{1}{\Vol(\Delta^{[K_{n-1}]})}\int_{\Delta^{[K_{n-1}]}}f(\mathbf{x})\d{\mathbf{x}}\\
&\leq \ldots\leq\frac{1}{\Vol(\Delta^{[K_1]})}\int_{\Delta^{[K_1]}}f(\mathbf{x})\d{\mathbf{x}}\leq\frac{1}{\Vol(\Delta^{[K_0]})}\int_{\Delta^{[K_0]}}f(\mathbf{x})\d{\mathbf{x}}\\
&=\frac{1}{\Vol(\Delta)}\int_{\Delta}f(\mathbf{x})\d{\mathbf{x}}
\end{align*}
(note that $\Vol(\Delta^{[K_i]})$ denotes $(n-i)$-dimensional volume and $\int_{\Delta^{[K_{i}]}}\dots\d{\mathbf{x}}$ denotes integration with respect to $(n-i)$-dimensional Lebesgue measure).
\end{corollary}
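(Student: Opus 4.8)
The plan is to recognize the displayed formula as a chain built from three ingredients and to treat each separately: the two endpoint \emph{equalities}, which amount to identifying the degenerate simplices $\Delta^{[K_0]}$ and $\Delta^{[K_n]}$, and the $n$ interior \emph{inequalities}, each of which is a direct application of Theorem~\ref{theorem:main}.

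First I would dispatch the interior inequalities. Fix $i\in\{0,1,\dots,n-1\}$ and put $K=K_i$, $L=K_{i+1}$. The hypotheses of Theorem~\ref{theorem:main} are satisfied: $K_i\subset K_{i+1}$ by assumption, and $\card K_{i+1}=i+1\leq n<n+1=\card N$ forces $K_{i+1}\subsetneq N$. The theorem then gives
\[
\frac{1}{\Vol \Delta^{[K_{i+1}]}}\int_{\Delta^{[K_{i+1}]}}f(\mathbf{x})\d{\mathbf{x}}\leq \frac{1}{\Vol \Delta^{[K_{i}]}}\int_{\Delta^{[K_{i}]}}f(\mathbf{x})\d{\mathbf{x}},
\]
and concatenating these for $i=n-1,n-2,\dots,0$ produces the entire interior chain.

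It then remains only to read off the two ends. Since $\card K_0=0$ we have $K_0=\emptyset$, whence $\Delta^{[K_0]}=\Delta^{[\emptyset]}=\Delta$ by the remark following \eqref{eq:definition of Delta[K]}; this is the bottom equality. At the other end $\card(N\setminus K_n)=(n+1)-n=1$, so the same remark identifies $\Delta^{[K_n]}$ with the single point $\mathbf{b}$ (indeed, substituting $k=n$ into \eqref{eq:vertices of Delta[K]} collapses the lone vertex to $\frac{1}{n+1}\sum_{i\in N}\mathbf{x}_i=\mathbf{b}$). Under the convention recorded in the parenthetical note, the $0$-dimensional Lebesgue measure is counting measure, so $\Vol(\Delta^{[K_n]})=1$ and $\int_{\Delta^{[K_n]}}f(\mathbf{x})\d{\mathbf{x}}=f(\mathbf{b})$, which is the top equality.

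The one step requiring genuine care---and the one I would flag as the main obstacle---is this degenerate top endpoint: one must fix and justify the convention that normalized integration over the $0$-simplex $\{\mathbf{b}\}$ returns $f(\mathbf{b})$, and check that it is compatible with the $i=n-1$ instance of Theorem~\ref{theorem:main} (which, read through this identification, is simply the left-hand Hermite--Hadamard inequality $f(\mathbf{b})\le\tfrac{1}{\Vol\Delta^{[K_{n-1}]}}\int_{\Delta^{[K_{n-1}]}}f(\mathbf{x})\d{\mathbf{x}}$ on the segment $\Delta^{[K_{n-1}]}$). Once this convention is in place, the corollary follows with no further computation.
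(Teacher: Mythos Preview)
Your argument is correct and is exactly the approach the paper intends: the corollary is stated there without proof, as an immediate consequence of Theorem~\ref{theorem:main} together with the identifications $\Delta^{[\emptyset]}=\Delta$ and $\Delta^{[K_n]}=\{\mathbf{b}\}$ noted after \eqref{eq:definition of Delta[K]}. Your extra care about the $0$-dimensional convention is a reasonable elaboration but is not treated separately in the paper.
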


Applying Theorem \ref{theorem:main} to all possible proper subsets of $N$ of the same cardinality and summing the obtained inequalities, we obtain the following result.
\begin{corollary}
If $f\colon \Delta\to\mathbb{R}$ is a convex function,
then
$$\frac{1}{\Vol \Delta}\int_{\Delta}f(\mathbf{x})\d{\mathbf{x}}\geq
\frac{1}{\binom {n+1} {k}}\sum_{\substack{\displaystyle{K\subsetneq N}\\{\card K=k}}}\frac{1}{\Vol \Delta^{[K]}}\int_{\Delta^{[K]}}f(\mathbf{x})\d{\mathbf{x}}\,.$$
\end{corollary}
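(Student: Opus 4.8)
The plan is to apply Theorem~\ref{theorem:main} in the special case where the smaller index set is empty, and then to average. First I would invoke the convention $\Delta^{[\emptyset]}=\Delta$ recorded in the remark after \eqref{eq:definition of Delta[K]}. Taking $K=\emptyset$ and letting $L$ be an arbitrary proper subset of $N$ with $\card L=k$, Theorem~\ref{theorem:main} (whose hypothesis $\emptyset\subset L\subsetneq N$ is trivially satisfied) yields
$$\frac{1}{\Vol \Delta^{[L]}}\int_{\Delta^{[L]}}f(\mathbf{x})\d{\mathbf{x}}\leq \frac{1}{\Vol \Delta}\int_{\Delta}f(\mathbf{x})\d{\mathbf{x}}.$$
This inequality holds simultaneously for every such $L$, with a common right-hand side that does not depend on $L$.

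Next I would sum this inequality over all $L\subsetneq N$ with $\card L=k$. Since $N$ has $n+1$ elements and the condition $L\subsetneq N$ forces $k\le n$, every $k$-element subset of $N$ is automatically proper, so the sum runs over exactly $\binom{n+1}{k}$ index sets. The right-hand side, being constant in $L$, contributes the total $\binom{n+1}{k}\cdot\frac{1}{\Vol\Delta}\int_\Delta f(\mathbf{x})\d{\mathbf{x}}$. Dividing both sides by $\binom{n+1}{k}$ then reproduces the asserted inequality verbatim.

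I expect no genuine analytic obstacle: once Theorem~\ref{theorem:main} is available, the corollary is a one-line averaging argument, and the passage from the main theorem's pairwise comparison to the averaged statement is purely formal. The only points meriting a moment's care are bookkeeping ones, namely confirming that the summation index ranges over precisely $\binom{n+1}{k}$ subsets (which rests on $k\le n$, guaranteed by $K\subsetneq N$) and that each summand $\frac{1}{\Vol\Delta^{[K]}}\int_{\Delta^{[K]}}f(\mathbf{x})\d{\mathbf{x}}$ is dominated by the single quantity $\frac{1}{\Vol\Delta}\int_\Delta f(\mathbf{x})\d{\mathbf{x}}$. Both facts follow immediately from the theorem together with the identification $\Delta^{[\emptyset]}=\Delta$.
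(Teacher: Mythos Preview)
Your proposal is correct and follows exactly the route indicated in the paper: apply Theorem~\ref{theorem:main} with $K=\emptyset$ to every $k$-element subset $L\subsetneq N$, then sum the resulting $\binom{n+1}{k}$ inequalities and divide. The paper records this as a single sentence before the corollary; your write-up simply makes the bookkeeping explicit.
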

By Theorem \ref{theorem:main} we get the following corollary.
\begin{corollary}
Let $f\colon\Delta\to\mathbb{R}$ be a convex function and let $k< l \leq n$.
Then
$$\frac{1}{\binom {n+1} {l}}\sum_{\mathclap{\substack{{L}\\{\card L=l}}}}\frac{1}{\Vol \Delta^{[L]}}\int_{\Delta^{[L]}}f(\mathbf{x})\d{\mathbf{x}}\leq \frac{1}{\binom {n+1} {k}}\sum_{\mathclap{\substack{{K}\\{\card K=k}}}}\frac{1}{\Vol \Delta^{[K]}}\int_{\Delta^{[K]}}f(\mathbf{x})\d{\mathbf{x}}.$$
\end{corollary}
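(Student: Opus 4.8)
The plan is to reduce to the consecutive case $l=k+1$ and then exploit a double-counting argument pairing each $l$-element subset with its $k$-element sub-subsets. First I would note that, by transitivity of $\leq$, it suffices to establish the inequality when $l=k+1$: the general case follows by chaining the consecutive comparisons from level $k$ up to level $l$. Throughout I write, for any proper subset $S\subsetneq N$,
$$M(S)=\frac{1}{\Vol \Delta^{[S]}}\int_{\Delta^{[S]}}f(\mathbf{x})\d{\mathbf{x}}.$$
Since $k<l\leq n$, every subset occurring in either sum has cardinality at most $n<\card N$, hence is a proper subset, so each $\Delta^{[S]}$ is a well-defined simplex and $M(S)$ makes sense.

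The analytic input is entirely Theorem \ref{theorem:main}, which gives $M(L)\leq M(K)$ whenever $K\subset L$. I would sum this single inequality over all pairs $(K,L)$ with $K\subset L$, $\card K=k$ and $\card L=k+1$. The heart of the argument is a counting observation: on the left-hand side each $L$ of size $k+1$ is counted once for each of its $\binom{k+1}{k}=k+1$ subsets of size $k$, whereas on the right-hand side each $K$ of size $k$ is counted once for each element of $N\setminus K$ that may be adjoined, giving $n+1-k$ occurrences. This turns the summed inequality into
$$(k+1)\sum_{\card L=k+1} M(L)\leq (n+1-k)\sum_{\card K=k} M(K).$$

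Finally I would invoke the elementary binomial identity $(k+1)\binom{n+1}{k+1}=(n+1-k)\binom{n+1}{k}$ and divide both sides of the displayed inequality by this common value; the left quotient collapses to $\binom{n+1}{k+1}^{-1}\sum_{\card L=k+1} M(L)$ and the right to $\binom{n+1}{k}^{-1}\sum_{\card K=k} M(K)$, which is precisely the claimed comparison of averages for $l=k+1$. Iterating this step for successive cardinalities $k,k+1,\dots,l$ yields the statement in full generality. I do not expect a genuine obstacle here: all the convexity is already absorbed into Theorem \ref{theorem:main}, and what remains is the combinatorial bookkeeping of the double count together with the verification of the binomial identity.
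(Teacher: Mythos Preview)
Your proposal is correct and follows essentially the same approach as the paper's own proof: reduce to the case $l=k+1$, sum the inequality from Theorem~\ref{theorem:main} over all pairs $K\subset L$ with $\card K=k$, $\card L=k+1$, and then use the counting observation that each $L$ contributes $k+1$ times and each $K$ contributes $n+1-k$ times. The only cosmetic difference is that the paper organizes the double sum by first fixing $K$ and summing over its oversets, then summing over $K$, and finishes by multiplying by $\frac{k!(n-k)!}{(n+1)!}$ rather than invoking the equivalent binomial identity.
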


\begin{proof}
Clearly it is sufficient to prove the corollary only in case $l=k+1$.
Fix $K=\{1,\dots,k\}$.
We have $n+1-k$ oversets of $K$ of cardinality $k+1$.
Applying Theorem \ref{theorem:main} to $K$ and all such oversets and summing the obtained inequalities, we deduce
\begin{align*}
\sum_{\mathclap{\substack{{L\supset K}\\{\card L=k+1}}}}\frac{1}{\Vol \Delta^{[L]}}\int_{\Delta^{[L]}}f(\mathbf{x})\d{\mathbf{x}}\leq (n+1-k) \frac{1}{\Vol \Delta^{[K]}}\int_{\Delta^{[K]}}f(\mathbf{x})\d{\mathbf{x}}.
\end{align*}
Summing this over all possible $K$, we obtain
\begin{equation*}
(k+1)\sum_{\mathclap{\substack{{L}\\{\card L=k+1}}}}\frac{1}{\Vol \Delta^{[L]}}\int_{\Delta^{[L]}}f(\mathbf{x})\d{\mathbf{x}}
\leq(n+1-k)\sum_{\mathclap{\substack{{K}\\{\card K=k}}}}\frac{1}{\Vol \Delta^{[K]}}\int_{\Delta^{[K]}}f(\mathbf{x})\d{\mathbf{x}},
\end{equation*}
since every $L$ has $k+1$ subsets of cardinality $k$. We complete the proof by multiplying both sides by $\frac{k!(n-k)!}{(n+1)!}$.
\end{proof}

\bigskip


\section*{Competing interests}
  The authors declare that they have no competing interests.

\section*{Author's contributions}
   AW came up with an idea, MN extended it and performed all necessary calculations. All authors read and approved the final manuscript.
%



\end{document}